\documentclass[reqno,10pt]{amsart}
\numberwithin{equation}{section}
\usepackage{latexsym}
\usepackage{amsmath}
\usepackage{amssymb}
\usepackage{mathrsfs}
\usepackage{graphicx,colordvi}
\usepackage{upgreek}
\usepackage{ifthen}
\usepackage{color}

\usepackage[T1]{fontenc}
\usepackage[latin1]{inputenc}

\numberwithin{equation}{section}

\newtheorem{defi}{Definition}[section]
\newtheorem{theorem}[defi]{Theorem}

\newtheorem{proposition}[defi]{Proposition}
\newtheorem{remark}[defi]{Remark}

\usepackage{latexsym}
\usepackage{amsmath}
\usepackage{amssymb}
\newcommand{\dH}[1]{\;{\rm d}{\mathcal{H}}^{#1}}

\newcommand{\cE}{{\mathcal E}}

\newcommand{\R}{{\mathbb R}}

\renewcommand{\epsilon}{\varepsilon}

\newcommand{\diver}{\operatorname{div}}
\newcommand{\bb}{{\bb}}

\begin{document}

\title[Analysis of a  Cahn--Hilliard--Canham--Helfrich flow]{Analysis of a  Cahn--Hilliard--Canham--Helfrich system for the evolution of a two-phase membrane}

\author{Harald Garcke}
\address{Fakult\"at f\"ur Mathematik\\
        Universit\"at Regensburg\\
        Regensburg, Germany}
\email{harald.garcke@ur.de}

\author{Mathias Wilke}
\address{Martin-Luther-Universit\"at Halle-Witten\-berg\\
         Institut f\"ur Mathematik \\
         06099 Halle (Saale), Germany}
\email{mathias.wilke@mathematik.uni-halle.de}

%\thanks{This work was supported by a grant from the Simons Foundation (\#426729, Gieri Simonett).}

\subjclass[2010]{}
% 35B40 Asymptotic behavior of solutions
% 35Q30 Navier-Stokes equations
% 35Q35 PDEs in connection with fluid mechanics

%\keywords{Surface fluid, viscous flow,  surface Navier-Stokes equations, ...}

\begin{abstract}
The coupling of the evolution of a surface with evolution equations defined on that surface
is of relevance in many applications and has been  in the focus of interest in the analysis of parabolic PDEs in recent years. In applications the evolution of two-phase vesicles and biomembranes is governed by flows decreasing an energy which involves Canham--Helfrich-type  curvature energies coupled to a Ginzburg--Landau energy. We derive a new Cahn--Hilliard--Canham--Helfrich   system for the evolution of  two-phase membranes. The resulting system is highly non-linear and we use the theory of
quasi-linear parabolic evolution equations in weighted $L_p$-spaces to show the existence of a strong local-in-time solution and hence demonstrate that the derived system is well-posed.
\end{abstract}

\maketitle

\section{Introduction}

The coupling of the evolution of a surface with the evolution of a function defined on that surface has received considerable attention
 in recent years, see, e.g.,  \cite{MayerSimonett98, MayerSimonett99, ElliotStinner10, ElliotStinner10b,BarrettGN17, PozziSt17, BarrettDS17, KovacsLL20,ElliottGK22, AbelsBG23, AbelsBG23b}. In particular, in biological applications the coupling of a  Willmore type flow
with  the evolution of a chemical species on the surface leads to
 a variety of complex membrane morphologies, see \cite{ElliotStinner10, ElliotStinner10b,BarrettGN17}. This is especially  the case when the chemical species undergoes phase separation and the different phases influence parameters of  the Willmore energy. In  this context we refer to work of Elliott and Stinner \cite{ElliotStinner10, ElliotStinner10b} who considered the evolution of two-phase biomembranes
with phase dependent material parameters. They coupled a geometric evolution equation
of Willmore type to an Allen--Cahn equation posed on the evolving surface.
However, in applications typically the chemical species described by the PDE on the surface fulfills a conservation property which is not satisfied by the non-conserving Allen--Cahn equation considered in \cite{ElliotStinner10, ElliotStinner10b}.
We therefore introduce a model that addresses this limitation and
 couple a Willmore-type flow to a Cahn--Hilliard equation which ensures  mass conservation of  the chemical species on the surface.

The energy of the system we  consider is
\begin{equation}
	\cE (\Gamma , c) := \int_\Gamma \left(\frac 12 \alpha (c) (H-\bar{H}(c))^2
	%\dH{d-1}
	 +
	% \int_\Gamma
	\alpha^G(c) G %\dH{d-1}
	+
	%\int_\Gamma
	\left( \frac \gamma 2| \nabla_\Gamma c|^2 +\Psi(c)\right) \right) \dH{d-1}.
\end{equation}
Here, $\Gamma\subset \R^d$ is a closed hypersurface and $c:\Gamma \to \R$ is the concentration of a chemical species on $\Gamma$.
The term $\int_\Gamma \left(\frac 12 \alpha (c) (H-\bar{H}(c))^2  +
% \int_\Gamma
\alpha^G(c) G \right)
\dH{d-1}$
models the elastic bending energy of the hypersurface  and has found wide applications in the mechanics of biomembranes and vesicles.
This part of the energy was introduced by Canham and Helfrich and generalizes the Willmore energy.
We refer to this generalized Willmore energy as the Canham--Helfrich energy and to its associated $L_2$-gradient flow as the Canham--Helfrich flow.
Here, $H$ is the mean curvature (the sum of the principal curvatures) and $G$ is the Gaussian curvature of the hypersurface $\Gamma$. The positive parameter $\alpha$ is the bending rigidity,  $\alpha^G$ is the Gaussian bending rigidity and $\bar{H}$ is the so-called spontaneous curvature.
The parameters $\alpha$, $\alpha^G$ and $\bar{H}$
are material-dependent and, in our model, vary
with the concentration $c$  of a diffusing chemical species.
 We refer to \cite{Baumgart2003} who  studied such models in the context of
 lipid bilayer membranes and observed a rich shape-transition behavior, including bud formation and vesicle fission. They observed that membranes formed from multiple lipid components can
 laterally separate into coexisting liquid phases, or domains, with
 distinct compositions. They claimed that the phenomena observed  may resemble raft formation in cell membranes
 a process highly important in living systems.

 The energetic term $\int_\Gamma
\left( \frac \gamma 2| \nabla_\Gamma c|^2 +\Psi(c) \right) \dH{d-1}$  represents the  Ginzburg--Landau energy
accounting for interfacial effects and phase separation phenomena.
%describing energetic effects due to the chemical species $c$.
The function $\Psi(c)$ is a
double-well potential having two distinct global minima and $\Psi(c) =\frac 12 (1-c^2)^2 $ is a typical example. The contribution $\frac \gamma 2|\nabla_\Gamma c|^2$ ensures that interfacial energy effects are taken into account in the energy. Here, $\nabla_\Gamma$ is the surface gradient on the surface $\Gamma$. Furthermore, $\dH{d-1}$ denotes integration with respect to the $(d-1) $-dimensional surface measure.

The evolution equations we now consider consist of an $L_2$-gradient flow for the
evolution of the interface $\Gamma$ and a Cahn-Hilliard type evolution for the surface.
The Cahn--Hilliard evolution equation on an evolving surface as derived in \cite{CaetanoEll21}, see also Section
\ref{sec:model}, is given as
\begin{equation}
	\label{CH-surface1}
	\partial^{\bullet} c =\diver_\Gamma(m(c)\nabla_\Gamma\mu)+cVH.
	\end{equation}
Here, $\partial^\bullet$ denotes the normal time derivative, $\diver_\Gamma$ is the surface divergence,  $m$ is a positive mobility function, $\mu$ is the chemical potential and  $V$ is the normal velocity  of an evolving hypersurface $\Gamma=\Gamma(t)$. We remark that $VH = - \diver_\Gamma (V \nu)$
where $\nu$ is the normal to the interface. As $V \nu$ is the velocity of the interface the term $cVH$
 accounts for changes due to the geometric motion of the surface.
 This follows from a transport theorem for an evolving interface, see e.g.
\cite{DDE05, BarrettGN20, BDGP23}.
 The chemical potential $\mu$ is given as the variational derivative  of $\cE$ with respect to $c$.
In fact, we obtain, see \cite{ElliotStinner10,ElliotStinner10b,BarrettGN17},
\begin{equation}
	\label{CH-surface2}
	\mu = -\gamma \Delta_\Gamma c+\Psi'(c) + \frac 12 \alpha'(c) (H-\bar{H}(c))^2  - \alpha (c) (H-\bar{H}(c)) \bar{H}'(c) + G(\alpha^G)'(c) .
\end{equation}

The system is completed by the $L_2$-gradient flow of $\cE$ for  $\Gamma$, i.e., the normal velocity $V$ is given as the negative variational derivative of $\cE$ with respect to $\Gamma$. The resulting geometric evolution equation is, see \cite{ElliotStinner10,ElliotStinner10b,BarrettGN17},
 \begin{multline}
	\label{surfaceDiff}
	V =-\Delta_\Gamma\left(\alpha(c)(H-\bar{H}(c))\right)-|L|^2\alpha(c)(H-\bar{H}(c))+\frac{1}{2}\alpha(c)(H-\bar{H}(c))^2 H \\
	   -\diver_\Gamma \left((H\operatorname{Id}+L)\nabla_\Gamma\alpha^G(c)\right)
	+\gamma  ((\nabla_\Gamma c)^T  L \nabla_\Gamma c  ) +
	\left( \frac \gamma 2 | \nabla_\Gamma c|^2 +\Psi(c) \right) H-\mu cH.
\end{multline}
Here, $ L=-\nabla_\Gamma \nu$ is the Weingarten map.
We assume that
 $m,\alpha,\alpha^G,\bar{H}$ and $\Psi$ are given functions subject to the following assumptions.
\begin{itemize}
	\item[(R)] The constitutive functions $m,\alpha,\alpha^G,\bar{H}$ and $\Psi$ are assumed to be smooth ($m\in C^2(\R)$ and $\alpha,\alpha^G,\bar{H},\Psi\in C^4(\R)$ would suffice).
	\item[(P)] For all $s\in\R$ we assume   $\alpha(s)>0$ and $m(s)>0$.
\end{itemize}
The system \eqref{CH-surface1}--\eqref{CH-surface2} can be seen as an $H^{-1}$-gradient flow of $\cE$  with respect to  the variable $c$ and the equation \eqref{surfaceDiff} can be seen as an $L_2$-gradient flow with respect to $\Gamma$, see \cite{BDGP23, Garcke}. These facts imply that the system \eqref{CH-surface1}--\eqref{surfaceDiff} is thermodynamically consistent in the sense that the energy $\cE$ decreases in time.
In fact, we obtain
\begin{equation}\label{energdecay}
	\frac d{dt} \cE (\Gamma , c)= -\int_\Gamma \left(V^2 + m(c)
	|\nabla_\Gamma c|^2 \right) \dH{d-1}.
\end{equation}
For other $(L_{2},H^{-1})$--surface gradient flows leading to second order parabolic equations we refer to
\cite{AbelsBG23, AbelsBG23b, backofen2026}.

The remainder of the paper is organized as follows. In Section \ref{sec:model} we derive the system
\eqref{CH-surface1}--\eqref{surfaceDiff},  show that the total mass $\int_\Gamma c
\dH{d-1}$ is conserved and prove the energy decay property \eqref{energdecay}.
In Section \ref{sec:GS} we reformulate the governing equations over a given compact, smooth, fixed (embedded) reference surface $\Sigma$.
We then introduce a proper functional analytic setting in Section \ref{sec:FAsetting} and prove in Section \ref{Local well-posedness} the
existence of a strong local solution to \eqref{CH-surface1}--\eqref{surfaceDiff} having optimal $L_p$-$L_q$-regularity by using the theory of quasi-linear parabolic evolution equations in time-weighted $L_p$-spaces introduced in \cite{KPW10, LPW14}. We note that the system \eqref{CH-surface1}--\eqref{surfaceDiff} contains a term of the form
$\Delta_\Gamma (H^2)$, which leads to a contribution that is quadratic in third-order operators. Such terms ask for estimates of Gagliardo--Nirenberg type, and precisely this theory has been developed in \cite{LPW14}.
%This significantly complicates the analysis and makes .

\section{Derivation of the governing equations}\label{sec:model}

The evolution equations we want to study involve a balance law for a chemical species diffusing on the evolving
surface. To derive the balance law in a local form, we use a transport theorem for an evolving surface. In fact, for a $C^1$-function $f$ defined
on an evolving surface $\Gamma=\Gamma(t)$, it holds, see \cite{BarrettGN20} for a proof of the transport theorem and
other definitions and results in the context of evolving hypersurfaces,
\begin{equation}\label{transportheorem}
	\frac d{dt}\int_\Gamma f \dH{d-1} =\int_\Gamma (\partial^\bullet f -  f HV )  \dH{d-1},
\end{equation}
where we assume that points on the surface are advected in normal direction. The above identity also holds if we replace $\Gamma$ by
a sufficiently smooth  $\Lambda(t)\subset \Gamma(t)$ which is transported in normal direction.
The balance law for a quantity $c$ diffusing on $\Gamma$ now states that for all sufficiently smooth  surface $\Lambda(t)\subset \Gamma(t)$  it holds
that, see \cite{CaetanoEll21},
\begin{equation}
	\label{balancelaw}
	\frac d{dt}\int_\Lambda c \dH{d-1} = - \int_\Lambda \diver_\Gamma q \dH{d-1},
\end{equation}
where $q$ is the tangential mass flux on $\Gamma(t)$.
Combining the above and using the arbitrariness of
 $\Lambda$, we obtain
the balance law
\begin{equation}
	\label{balancelaw2}
	\partial^\bullet c -  c HV = -\diver_\Gamma q.
\end{equation}
Now \eqref{transportheorem} for $f=c$ gives with the help of
\eqref{balancelaw2} and the divergence theorem that the total mass is conserved.
%As in the original Cahn--Hilliard equation we now choose the flux $q$ as
%$$ q=-m(c) \nabla \mu, $$
%where $\mu$ is the chemical potential which as in the original Cahn--Hilliard model we plan to choose   as  the functional derivative of the energy $\cE$ with respect to $c$.

%FIRST WE DERIVE THE CAHN HILLIARD EQUAUION FROM BALANCE LAWS.
%
%THIS IN PARTCULAR GIVES THE TERM
%
%$$cVH$$
%
%FOR THE NORMAL VELOCITY WE TAKE THE
%$L_2$--GRADIENT FLOW
%To be able to compute functional derivative of the energy $\cE$ with respect to $c$ and the evolution of the surface as the $L_2$-gradient of the energy,
We now compute the derivative of the energy of a surface moving with normal velocity $V$, coupled to an evolving concentration field with normal time derivative
$\partial^\bullet c$.  We then want to choose the tangential mass flux $q$ and the normal velocity of the surface such that the energy cannot increase.
The derivative of the energy under such evolutions was computed in \cite{ElliotStinner10b,BarrettGN17} using the transport theorem \eqref{transportheorem} and evolution equations for geometric quantities, such as the metric and curvature on evolving surfaces; see also \cite{BarrettGN20}.
The variation of the  mean curvature part of the energy
$$\int_\Gamma \frac 12 \alpha (c) (H-\bar{H}(c))^2\dH{d-1}
$$
has been computed in Lemma 4.4 in \cite{ElliotStinner10b} and in (A.5) of \cite{BarrettGN17}. The variation of the Gaussian energy part
$$
\int_\Gamma \alpha^G(c) G \dH{d-1}
$$
can be computed with the help of the formula $ G=\frac 12(H^2- |L|^2)$, see
Lemma 4.5 in \cite{ElliotStinner10b} and (A.17) in \cite{BarrettGN17}.
 Finally, the variational derivative of the  Ginzburg--Landau energy
 $$\int_\Gamma
 \left( \frac \gamma 2| \nabla_\Gamma c|^2 +\Psi(c)\right)  \dH{d-1}$$
 has been computed in Lemma 4.7 in\cite{ElliotStinner10b} and in
 (A.6) in \cite{BarrettGN17}.
Using the above mentioned  results
%, the equations \eqref{CH-surface1}--\eqref{surfaceDiff} and integration by parts
we get
\begin{eqnarray*}
	\frac d{dt} \cE (\Gamma , c)&=& \int_\Gamma \partial^\bullet c \Big(-\gamma \Delta_\Gamma c+\Psi'(c) + \frac 12 \alpha'(c) (H-\bar{H}(c))^2  - \alpha (c) (H-\bar{H}(c)) \bar{H}'(c) \\
	&& \qquad +G(\alpha^G)'(c) \Big)
	+V\Big(\Delta_\Gamma\left(\alpha(c)(H-\bar{H}(c))\right)+|L|^2\alpha(c)(H-\bar{H}(c))
	\\&&\qquad-\frac{1}{2}\alpha(c)(H-\bar{H}(c))^2 H
	\;\diver_\Gamma \left((H\operatorname{Id}+L)\nabla_\Gamma\alpha^G(c)\right)
	-	\\&& \qquad\gamma  ((\nabla_\Gamma c)^T  L \nabla_\Gamma c  ) -
	( \frac \gamma 2 | \nabla_\Gamma c|^2 +\Psi(c) ) H \Big)\dH{d-1}.
	\end{eqnarray*}
In order to obtain evolution laws that are thermodynamically consistent, we  choose the tangential mass flux $q$ and the normal velocity $V$ such that the energy does not increase.	Using the mass balance law $\partial^\bullet c -  c HV = -\diver_\Gamma q$,
and setting $\mu =-\gamma \Delta_\Gamma c+\Psi'(c) + \frac 12 \alpha'(c) (H-\bar{H}(c))^2  - \alpha (c) (H-\bar{H}(c)) \bar{H}'(c) + G(\alpha^G)'(c)$ we get using integration by parts on surfaces
	\begin{eqnarray*}
		\frac d{dt} \cE (\Gamma , c)
		&=&
		\int_\Gamma
		 q\cdot \nabla_\Gamma \mu\dH{d-1}
		% (c HV +\diver_\Gamma q ) \mu
%		
%		 \Big(-\gamma \Delta_\Gamma c+\Psi'(c) + \frac 12 \alpha'(c) (H-\bar{H}(c))^2  - \alpha (c) (H-\bar{H}(c)) \bar{H}'(c) \\
%		&& \qquad +G(\alpha^G)'(c) \Big)
	\\	&&+\int_\Gamma V\Big( cH\mu+ \Delta_\Gamma\left(\alpha(c)(H-\bar{H}(c))\right)+|L|^2\alpha(c)(H-\bar{H}(c))
		\\&&-\frac{1}{2}\alpha(c)(H-\bar{H}(c))^2 H
		-\diver_\Gamma \left((H\operatorname{Id}+L)\nabla_\Gamma\alpha^G(c)\right)
			\\&& \qquad-\gamma  ((\nabla_\Gamma c)^T  L \nabla_\Gamma c  ) -
		( \frac \gamma 2 | \nabla_\Gamma c|^2 +\Psi(c) ) H \Big)\dH{d-1} .
	\end{eqnarray*}
	Defining the normal velocity as in
	\eqref{surfaceDiff} and setting  $q=-m(c) \nabla \mu$
		with a mobility  $m\geq 0$ we obtain
%	\int_\Gamma \partial^\bullet c\mu  - (V+{\mu cH})V\dH{d-1}\\
\begin{eqnarray*}
\frac d{dt} \cE (\Gamma , c)
%	&	=&\int_\Gamma\mu (\diver_\Gamma(m(c)\nabla_\Gamma\mu)+cVH)
%	- V^2-{\mu cH}V\dH{d-1}\\
	&=&-\int_\Gamma \left( m(c)
	|\nabla_\Gamma \mu|^2 + V^2 \right) \dH{d-1}\leq 0.
\end{eqnarray*}
This shows that the system \eqref{CH-surface1}--\eqref{surfaceDiff} is thermodynamically consistent in the sense that the free energy is non-increasing in time. A gradient flow
interpretation of the system \eqref{CH-surface1}--\eqref{surfaceDiff} can be derived as in \cite{AbelsBG23}.

\section{Geometric setting}\label{sec:GS}

In the following, we assume that the evolving surface $\Gamma(t)$ is represented as the graph of a height function $\rho$ over a given compact, smooth, fixed (embedded) reference surface $\Sigma$. That is,
$$\Gamma(t) = \{\psi_\rho(p) := p + \rho(t,p)\nu_\Sigma(p) : p \in \Sigma\},\quad t \ge 0.$$
Using coordinates on $\Sigma$, we  obtain the following transformed differential operators for $u_* := u \circ \psi_\rho$ and $F_* := F \circ \psi_\rho$:
$$\nabla_\rho u_* = (\nabla_\Gamma u)\circ\psi_\rho = P_\Gamma M_0(\rho)\nabla_\Sigma u_*,$$
$$\diver_\rho F_* = (\diver_\Gamma F)\circ\psi_\rho = \operatorname{tr}\left[P_\Gamma M_0(\rho)\nabla_\Sigma F_*\right],$$
$$\Delta_\rho u_* = (\Delta_\Gamma u)\circ\psi_\rho = M_0(\rho)P_\Gamma M_0(\rho) : \nabla_\Sigma^2 u_* + (b(\nabla_\Sigma^k\rho) | \nabla_\Sigma u_*),$$
where $k \in \{0, 1, 2\}$. Furthermore, we have
$$\nu_\rho := \nu_\Gamma\circ\psi_\rho = \beta(\rho)\big(\nu_\Sigma - a(\rho)\big),$$
$$V_\rho := V\circ\psi_\rho = \beta(\rho)\partial_t\rho,$$
$$H_\rho := H\circ\psi_\rho = -\diver_\rho\nu_\rho,$$
$$L_\rho := L\circ\psi_\rho = -\nabla_\rho\nu_\rho,$$
and the transformed normal time derivative is given by
$$\partial^{\bullet}u_* = \partial_t u_* - \partial_t\rho\, (\nu_\Sigma |\nabla_\rho u_*).$$
%where $u_* := u \circ \psi_\rho$ and $F_* := F \circ \psi_\rho$. We
Here, we use notation as in \cite[Section 2.2 \& 2.5]{PruSim16}
%. In particular,
and in particular, for the above the following relations need to hold:
$$M_0(\rho) = (I - \rho L_\Sigma)^{-1},\quad a(\rho) = M_0(\rho)\nabla_\Sigma \rho,\quad \beta(\rho) = (1 + |a(\rho)|_2^2)^{-1/2},$$
provided that $\|\rho\|_{L_\infty(\Sigma)} < r$ for some sufficiently small $r > 0$ (see \cite[(2.43)]{PruSim16}), where $L_\Sigma = -\nabla_\Sigma\nu_\Sigma$ denotes the Weingarten tensor (see \cite[Section 2.1 \& 2.2]{PruSim16}).

Now, setting $(v, \eta) := (c, \mu) \circ \psi_\rho$, the transformed system on $\Sigma$ reads:
\begin{equation}
    \label{CH-surface_transf}
    \begin{aligned}
    \partial_t v &= \diver_\rho\big(m(v)\nabla_\rho\eta\big) + \partial_t\rho\left(v\beta(\rho) H_\rho + (\nu_\Sigma|\nabla_\rho v)\right)&&\text{on}\;\;\Sigma,\\
    \eta &= -\gamma\Delta_\rho v + \Psi'(v)\\
    &\quad+ \frac 12 \alpha'(v) (H_\rho-\bar{H}(v))^2  - \alpha (v) (H_\rho-\bar{H}(v)) \bar{H}'(v) + G(\alpha^G)'(v) &&\text{on}\;\; \Sigma,
    \end{aligned}
\end{equation}
and
\begin{multline}
    \label{surfaceDiff_transf}
    \beta(\rho)\partial_t \rho = -\Delta_\rho\big(\alpha(v)(H_\rho-\bar{H}(v))\big) - |L_\rho|^2\alpha(v)(H_\rho-\bar{H}(v))\\
     +\frac{1}{2}\alpha(v)(H_\rho-\bar{H}(v))^2 H_\rho - \diver_\rho\left((H_\rho\operatorname{Id} + L_\rho)\nabla_\rho\alpha^G(v)\right)\\
     +\gamma  ((\nabla_\rho v)^T  L_\rho \nabla_\rho v  ) +
	\left( \frac \gamma 2 | \nabla_\rho v|^2 +\Psi(v) \right) H_\rho-\eta v H_\rho\;\;\text{on}\; \Sigma,
\end{multline}
with initial values $(v(0), \rho(0)) = (v_0, \rho_0)$.

Finally,  for $\rho = 0$ (i.e., on $\Sigma$ itself), the operators reduce to their classical forms: $\nabla_0 = \nabla_\Sigma$ (surface gradient), $\diver_0 = \diver_\Sigma$ (surface divergence), $\Delta_0 = \Delta_\Sigma$ (Laplace-Beltrami operator), $\nu_0 = \nu_\Sigma$, $H_0 = H_\Sigma := -\diver_\Sigma \nu_\Sigma$ (mean curvature), and $L_0 = L_\Sigma$.

\section{Functional analytic setting}\label{sec:FAsetting}

\subsection{Function spaces}

According to \cite[Section 2.2]{PruSim16}, both $-\Delta_\rho$ and $-H_\rho$ are second-order, strongly elliptic operators acting on functions defined on $\Sigma$. Consequently, the highest-order terms in the equations for $v$ and $\rho$ are given by $\partial_t v$, $\Delta_\rho^2 v$, $\partial_t\rho$, and $\Delta_\rho H_\rho$.

For $1<p,q<\infty$ and $\mu\in (1/p,1]$, we select $L_{p,\mu}((0,T);L_q(\Sigma))$ as the base space for equations \eqref{CH-surface_transf} and \eqref{surfaceDiff_transf}. Here, for any UMD-space $E$, the time-weighted function spaces $L_{p,\mu}((0,T);E)$ are defined as
$$
L_{p,\mu}((0,T);E) := \{u\colon (0,T)\to E \mid [t\mapsto t^{1-\mu} u(t)] \in L_p((0,T);E)\},
$$
equipped with the norm
$$
\|u \|_{L_{p,\mu}((0,T);E)} := \| [t\mapsto t^{1-\mu} u(t)] \|_{L_p((0,T);E)}.
$$
Given $k \in \mathbb{N}_0$, the corresponding weighted Sobolev spaces are defined by
$$
H_{p,\mu}^{k}((0,T);E) := \{u \in W_{1,\mathrm{loc}}^{k}((0,T);E) \mid u^{(j)} \in L_{p,\mu}((0,T);E), \enspace j
\in \{0,\dots,k\}\},
$$
with the natural norms. For properties and applications of these weighted function spaces, we refer to \cite{PruSim16} or the survey \cite{Wil23}.

Since both $\Delta_\rho^2 v$ and $\Delta_\rho H_\rho$ are operators of order four, the optimal regularity class for $v$ and $\rho$ is
$$
H_{p,\mu}^1((0,T);L_q(\Sigma))\cap L_{p,\mu}((0,T);H_q^4(\Sigma)).
$$
In view of the term
$$
\partial_t\rho\left(v\beta(\rho) H_\rho+(\nu_\Sigma|\nabla_\rho v)\right)
$$
in \eqref{CH-surface_transf}, it is necessary for
$$
v\beta(\rho) H_\rho+(\nu_\Sigma|\nabla_\rho v)
$$
to act as a multiplier in $L_{p,\mu}((0,T);L_q(\Sigma))$, i.e.,
$$
v\beta(\rho) H_\rho+(\nu_\Sigma|\nabla_\rho v) \in L_\infty((0,T);L_\infty(\Sigma)).
$$
This condition is certainly fulfilled if
$$
v,\rho\in C([0,T];C^2(\Sigma)).
$$
To achieve this, note that \cite[Theorem 3.4.8]{PruSim16} yields the embedding
$$
H_{p,\mu}^1((0,T);L_q(\Sigma))\cap L_{p,\mu}((0,T);H_q^4(\Sigma))\hookrightarrow C([0,T];B_{qp}^{4\mu-4/p}(\Sigma)),
$$
where
$$(L_q(\Sigma),H_q^4(\Sigma))_{\mu-1/p,p}=B_{qp}^{4\mu-4/p}(\Sigma)$$
is the trace space of $v,\rho$ and $B_{qp}^s(\Sigma)$ denotes a Besov space. From now on, let us assume that
$p,q\in (1,\infty)$ and $\mu\in (1/p,1]$ satisfy
\begin{equation}\label{eq:Assump_pq}
\frac{4}{p}+\frac{n}{q}<2\quad\text{and}\quad 2+\frac{4}{p}+\frac{n}{q}<4\mu.
\end{equation}
Under these assumptions, it follows that
\begin{equation}\label{eq:SobEmb1}
B_{qp}^{4\mu-4/p}(\Sigma)\hookrightarrow C^2(\Sigma),
\end{equation}
so that in particular
$$
v,\rho\in C([0,T];C^2(\Sigma)).
$$
We note that the embedding \eqref{eq:SobEmb1} follows from \cite[Theorem 4.6.1]{Tri78} in combination with \cite[Theorem 4.4]{Ama25}.

For the remainder, we define the spaces $X_0 = L_q(\Sigma)$, $X_1 = H_q^4(\Sigma)$, and
$$
X_{\gamma, \mu} := (X_0, X_1)_{\mu-1/p, p} = B_{qp}^{4\mu-4/p}(\Sigma).
$$
Furthermore, we set
$$
V_\mu := \{\rho \in X_{\gamma, \mu} \mid \|\rho\|_{L_\infty(\Sigma)} < r \}.
$$

\subsection{Reformulation of the problem}

In this section, we reformulate \eqref{CH-surface_transf} and \eqref{surfaceDiff_transf} in an abstract framework. We begin with equation \eqref{surfaceDiff_transf} for $\rho$ and introduce $\mathcal{F}_h(v,\rho)$ as the right-hand side of \eqref{surfaceDiff_transf} divided by $\beta(\rho)$, so that $\partial_t\rho = \mathcal{F}_h(v,\rho)$. For local coordinates $(U, \varphi)$, where $U \subset \Sigma$ is open, we define $(v_*, \rho_*) := \varphi_*(v,\rho) := (v,\rho) \circ \varphi^{-1}$.

\begin{proposition}\label{pro:quasi_rho}
Let $1<p<\infty$, $\mu\in (1/p,1]$ satisfy \eqref{eq:Assump_pq} and assume (R).
For $(v, \rho) \in X_1 \times (X_1 \cap V_\mu)$, it holds that
$$
\mathcal{F}_h(v,\rho) = A_h(v,\rho)\rho + \text{lower order terms},
$$
where $A_h:X_{\gamma,\mu}\times V_\mu \to \mathcal{L}(X_1, X_0)$, and in local coordinates its representation is given by
$$
\varphi_*A_h(v,\rho)\hat{\rho} = \sum_{3 \le |\sigma| \le 4} a_\sigma^{h}(v_*, \partial v_*, \rho_*, \partial \rho_*, \partial^2 \rho_*) \partial^\sigma \hat{\rho}_*.
$$
Here, $a_\sigma^{h}$ is a smooth function, and the leading order term reads
$$
\sum_{|\sigma|=4} a_\sigma^{h}(v_*,\rho_*,\partial \rho_*) \partial^\sigma \hat{\rho}_* = -\frac{1}{\beta(\rho_*)}\alpha(v_*) \sum_{i,j,k,\ell} a^{k\ell}(\rho_*,\partial \rho_*) c^{ij}(\rho_*,\partial \rho_*) \partial_i \partial_j \partial_k \partial_\ell \hat{\rho}_*,
$$
with positive definite matrices $(a^{k\ell})$ and $(c^{ij})$. The lower order terms involve derivatives of $v$ and $\rho$ of order at most $2$.
\end{proposition}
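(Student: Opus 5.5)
The plan is to expand every term on the right-hand side of \eqref{surfaceDiff_transf}, divided by $\beta(\rho)$, in local coordinates. Each term is then sorted by the highest derivative of $\rho$ it contains. The basic structural inputs come from \cite[Section 2.2]{PruSim16}, which we use throughout:
\begin{itemize}
\item $\nu_\rho$, $M_0(\rho)$, $a(\rho)$, $\beta(\rho)$ and $P_\Gamma$ are smooth functions of $(\rho_*,\partial\rho_*)$;
\item $L_\rho=-\nabla_\rho\nu_\rho$, and hence $G$, is smooth in $(\rho_*,\partial\rho_*,\partial^2\rho_*)$;
\item the mean curvature is quasilinear of second order,
$$H_\rho=\sum_{i,j} c^{ij}(\rho_*,\partial\rho_*)\,\partial_i\partial_j\rho_*+h_0(\rho_*,\partial\rho_*),$$
with $(c^{ij})$ positive definite (more precisely $\beta(\rho)$ times a perturbation of the inverse metric; the sign is fixed by $H=-\diver\nu$);
\item $\Delta_\rho=\sum_{k,\ell} a^{k\ell}(\rho_*,\partial\rho_*)\,\partial_k\partial_\ell+(\text{first order})$, with $(a^{k\ell})$ the inverse metric of $\Gamma$ pulled back, hence positive definite.
\end{itemize}

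The principal term is $-\Delta_\rho\big(\alpha(v)(H_\rho-\bar H(v))\big)$. By the product rule,
$$\Delta_\rho(\alpha(v)H_\rho)=\alpha(v)\,\Delta_\rho H_\rho+2\big(\nabla_\rho\alpha(v)\,\big|\,\nabla_\rho H_\rho\big)+H_\rho\,\Delta_\rho\alpha(v).$$
\begin{itemize}
\item Applying $\sum_{k,\ell} a^{k\ell}\partial_k\partial_\ell$ to $\sum_{i,j} c^{ij}\partial_i\partial_j\rho_*$ gives the leading symbol $\alpha(v_*)\sum a^{k\ell}c^{ij}\partial_i\partial_j\partial_k\partial_\ell\rho_*$. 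Dividing by $\beta$ and including the minus sign yields exactly the stated fourth-order part.
\item When the derivatives fall on the coefficients $c^{ij}(\rho_*,\partial\rho_*)$, we get terms such as $\partial^2\rho_*\,\partial^3\rho_*$ and $(\partial^2\rho_*)^3$. The cross term $(\nabla_\rho\alpha(v)\,|\,\nabla_\rho H_\rho)$ gives $\partial v_*\,\partial^3\rho_*$.
\item All third-order contributions are written as $a_\sigma(v_*,\partial v_*,\rho_*,\partial\rho_*,\partial^2\rho_*)\,\partial^\sigma\rho_*$ with $|\sigma|=3$. This is the quasilinear freezing: coefficients may depend on $\partial^2\rho_*$ and $\partial v_*$, which are controlled in $X_{\gamma,\mu}\hookrightarrow C^2$ by \eqref{eq:SobEmb1}.
\item The term $H_\rho\Delta_\rho\alpha(v)$ contains $\partial^2 v_*$ and $\partial^2\rho_*$ only, so it is lower order.
\item The term $\Delta_\rho(\alpha(v)\bar H(v))$ involves at most second derivatives of $v$ and first derivatives of $\rho$.
\end{itemize}

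The remaining terms go as follows.
\begin{itemize}
\item $|L_\rho|^2\alpha(v)(H_\rho-\bar H(v))$, $\tfrac12\alpha(v)(H_\rho-\bar H(v))^2H_\rho$, $\gamma\,(\nabla_\rho v)^TL_\rho\nabla_\rho v$, $\big(\tfrac\gamma2|\nabla_\rho v|^2+\Psi(v)\big)H_\rho$, and the $\Psi'(v)$ part of $\eta vH_\rho$ are all smooth functions of $(v_*,\partial v_*,\rho_*,\partial\rho_*,\partial^2\rho_*)$. They are therefore lower order.
\item The Gaussian term $\diver_\rho\big((H_\rho\operatorname{Id}+L_\rho)\nabla_\rho\alpha^G(v)\big)$ expands into a part with $\partial^3\rho_*$ times $\partial v_*$, plus terms with at most second derivatives. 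The first part is absorbed into the $|\sigma|=3$ coefficients. Alternatively, one uses the divergence-free property (Codazzi) of $H\operatorname{Id}-L$ if the sign convention allows. No fourth-order $\rho$-derivative occurs here.
\item The critical term is $-\eta vH_\rho$, since $\eta$ contains $-\gamma\Delta_\rho v$ and $\tfrac12\alpha'(v)H_\rho^2$. Here $-\gamma v H_\rho\Delta_\rho v$ involves $\partial^2 v_*$ and $\partial^2\rho_*$, and $H_\rho^3$ involves only $\partial^2\rho_*$. All of these are lower order with derivatives of order at most $2$.
\end{itemize}

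The main obstacle is bookkeeping. First, the expansion of $\Delta_\rho H_\rho$ must be tracked carefully so that every genuinely third-order term in $\rho$ has a coefficient depending only on $(v_*,\partial v_*,\rho_*,\partial\rho_*,\partial^2\rho_*)$. Second, the product of the two positive definite symbols must give the claimed structure. One subtlety is that the statement writes the leading coefficient as depending on $(v_*,\rho_*,\partial\rho_*)$ only, which is consistent since $\partial v_*$ enters only at third order. I would verify positivity of $(c^{ij})$ by evaluating at $\rho=0$, where $c^{ij}$ reduces to the inverse metric of $\Sigma$, and extend by continuity for $\|\rho\|_{C^1}$ small. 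Finally, the mapping property $A_h(v,\rho)\in\mathcal L(X_1,X_0)$ follows because all coefficients are continuous, being functions of $C^2$ data by \eqref{eq:SobEmb1}. A partition of unity then assembles the local representations into the global operator.
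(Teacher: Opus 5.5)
Your proposal is correct and follows essentially the same route as the paper: you expand $\Delta_\rho(\alpha(v)H_\rho)$ by the product rule in the local forms $\Delta_\rho=a^{k\ell}\partial_k\partial_\ell+b^k\partial_k$ and $H_\rho=c^{ij}\partial_i\partial_j\rho+h$, identify the principal part $-\beta^{-1}\alpha(v)a^{k\ell}c^{ij}\partial_i\partial_j\partial_k\partial_\ell\rho$, observe that third derivatives of $\rho$ enter linearly (including those from $\nabla_\rho\alpha(v)\cdot\nabla_\rho H_\rho$ and from the $\alpha^G$-term) with coefficients depending on $(v,\partial v,\rho,\partial\rho,\partial^2\rho)$, and classify everything else as order at most two. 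Small refinements: $\Delta_\rho(\alpha(v)\bar H(v))$ also contains $\partial^2\rho$ through $b^k$ (harmless); positivity of $(c^{ij})$ needs no smallness of $\|\rho\|_{C^1}$, since in fact $c^{ij}=\beta(\rho)a^{ij}$ (only the term $\partial_i\partial_j\rho\,\nu_\Sigma$ in $\partial_i\partial_j\psi_\rho$ carries second derivatives, and $(\nu_\rho|\nu_\Sigma)=\beta(\rho)$); and the Codazzi shortcut does not apply to $H\operatorname{Id}+L$ as written, because it produces $2\nabla_\rho H_\rho\cdot\nabla_\rho\alpha^G(v)$, so your primary treatment of that term is the one that works.
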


\begin{proof}
The third- and fourth-order terms in $\mathcal{F}_h$ originate from $\Delta_\rho(\alpha(v) H_\rho)$ and $\operatorname{div}_\rho (H_\rho \operatorname{Id} + L_\rho)$. All remaining terms in $\mathcal{F}_h$ involve at most second-order derivatives of $v$ and $\rho$.

For $\rho \in X_1 \cap V_\mu$, we have
$$
\Delta_\rho(\alpha(v) H_\rho) = H_\rho\, \Delta_\rho \alpha(v) + 2\, \nabla_\rho \alpha(v) \cdot \nabla_\rho H_\rho + \alpha(v)\, \Delta_\rho H_\rho.
$$
The term $H_\rho\, \Delta_\rho \alpha(v)$ is second order in both $v$ and $\rho$. We now consider the terms $\nabla_\rho \alpha(v) \cdot \nabla_\rho H_\rho$ and $\alpha(v)\, \Delta_\rho H_\rho$.

For readability, the subscript $*$ will be omitted when working in local coordinates. According to \cite[Section 2.2.3 \& 2.2.5]{PruSim16}, we have
$$
\Delta_\rho = a^{ij}(\rho, \partial\rho) \partial_i \partial_j + b^k(\rho, \partial\rho, \partial^2 \rho) \partial_k,
$$
and
$$
H_\rho = c^{ij}(\rho, \partial \rho) \partial_i \partial_j \rho + h(\rho, \partial \rho),
$$
where $(a^{ij})$ and $(c^{ij})$ are symmetric and positive definite and $b^k$ is linear in $\partial^2\rho$. A direct calculation yields
$$
\begin{aligned}
&\partial_k\partial_\ell \big( c^{ij}(\rho, \partial \rho) \partial_i \partial_j \rho + h(\rho, \partial \rho) \big) \\
&= c^{ij}\, \rho_{ijk\ell}
+ \bigl( c^{ij}_\rho \rho_k + c^{ij}_{\rho_m} \rho_{mk} \bigr) \rho_{ij\ell}
+ \bigl( c^{ij}_\rho \rho_\ell + c^{ij}_{\rho_m} \rho_{m\ell} \bigr) \rho_{ijk} \\
&\quad + \Bigl(
c^{ij}_{\rho\rho} \rho_k \rho_\ell
+ c^{ij}_\rho \rho_{k\ell}
+ c^{ij}_{\rho\,\rho_m} (\rho_{m\ell} \rho_k + \rho_{mk} \rho_\ell)
+ c^{ij}_{\rho_m\rho_n} \rho_{mk} \rho_{n\ell}
+ c^{ij}_{\rho_m} \rho_{mk\ell}
\Bigr) \rho_{ij} \\
&\quad + h_{\rho\rho}\, \rho_k \rho_\ell + h_\rho\, \rho_{k\ell} + h_{\rho\,\rho_n} \rho_{nk} \rho_\ell \\
&\quad + h_{\rho_m\rho} \rho_k \rho_{m\ell}
+ h_{\rho_m\rho_n} \rho_{nk} \rho_{m\ell}
+ h_{\rho_m} \rho_{mk\ell}
\end{aligned}
$$
and
$$
\partial_k \left( c^{ij}(\rho, \partial \rho)\, \partial_i \partial_j \rho + h(\rho, \partial \rho) \right)
= \bigl( c^{ij}_\rho \rho_k + c^{ij}_{\rho_m} \rho_{mk} \bigr) \rho_{ij}
+ c^{ij} \rho_{ijk} + h_\rho \rho_k + h_{\rho_m} \rho_{mk}.
$$
Thus,
$$
\begin{aligned}
\Delta_\rho H_\rho &= a^{k\ell}\bigl[ c^{ij} \rho_{ijk\ell}
+ \bigl( c_\rho^{ij} \rho_k + c_{\rho_m}^{ij} \rho_{mk} \bigr) \rho_{ij\ell}
+ \bigl( c_\rho^{ij} \rho_\ell + c_{\rho_m}^{ij} \rho_{m\ell} \bigr) \rho_{ijk} \\
&\quad + (c_{\rho\rho}^{ij} \rho_k \rho_\ell + c_\rho^{ij} \rho_{k\ell}
+ c_{\rho\,\rho_m}^{ij} ( \rho_{m\ell}\rho_k + \rho_{mk}\rho_\ell )
+ c_{\rho_m\rho_n}^{ij} \rho_{mk}\rho_{n\ell}
+ c_{\rho_m}^{ij} \rho_{mk\ell} ) \rho_{ij} \\
&\quad + h_{\rho\rho} \rho_k \rho_\ell + h_\rho \rho_{k\ell}
+ h_{\rho\,\rho_n} \rho_{nk} \rho_\ell \\
&\quad + h_{\rho_m\rho} \rho_k \rho_{m\ell}
+ h_{\rho_m\rho_n} \rho_{nk} \rho_{m\ell}
+ h_{\rho_m} \rho_{mk\ell} \bigr] \\
&\quad + b^k \bigl[ (c_\rho^{ij} \rho_k + c_{\rho_m}^{ij} \rho_{mk}) \rho_{ij}
+ c^{ij} \rho_{ijk} + h_\rho \rho_k + h_{\rho_m} \rho_{mk} \bigr]
\end{aligned}
$$
To simplify notation, we have set $\rho_i := \partial_i \rho$, $\rho_{ij} := \partial_i\partial_j\rho$, $h_\rho := \partial_\rho h$, $c_{\rho\rho_m}^{ij} := \partial_\rho \partial_{\rho_m} c^{ij}$, and so on.

Next, the transformed surface gradient $\nabla_\rho$ in local coordinates is given by
$$
\nabla_\rho = g_\Gamma^{k\ell} \partial_\ell \psi_\rho \partial_k,
$$
so that
\begin{equation}\label{eq:nablaHrho}
\nabla_\rho H_\rho = g_\Gamma^{k\ell} \left[ (c_\rho^{ij} \rho_k + c_{\rho_m}^{ij} \rho_{mk} ) \rho_{ij} + c^{ij} \rho_{ijk} + h_\rho \rho_k + h_{\rho_m} \rho_{mk} \right] \partial_\ell \psi_\rho
\end{equation}
and
$$
\nabla_\rho \alpha(v) = g_\Gamma^{k\ell} \alpha'(v) \partial_k v \partial_\ell \psi_\rho.
$$
It is important to observe that the third- and fourth-order derivatives of $\rho$ appear linearly in both $\Delta_\rho H_\rho$ and $\nabla_\rho H_\rho$. The leading term of fourth-order in $\Delta_\rho H_\rho$ is
$$
a^{k\ell}(\rho, \partial\rho) c^{ij}(\rho, \partial\rho) \partial_i \partial_j \partial_k \partial_\ell \rho.
$$
Analogous reasoning shows that $\operatorname{div}_\rho (H_\rho \operatorname{Id} + L_\rho)$ is of third order in $\rho$ and first order in $v$, with third-order derivatives of $\rho$ appearing linearly.

This demonstrates that
$$
\mathcal{F}_h(v,\rho) = A_h(v,\rho)\rho + \text{lower order terms},
$$
where $A_h:X_{\gamma,\mu}\times V_\mu \to \mathcal{L}(X_1, X_0)$ satisfies the representations above, and the lower order terms involve only derivatives of $v$ and $\rho$ of order at most $2$.
\end{proof}

\noindent
By Proposition \ref{pro:quasi_rho},
$$
F_h(v,\rho) := \mathcal{F}_h(v,\rho) - A_h(v,\rho)\rho
$$
contains only derivatives of $v$ and $\rho$ up to second order, and $F_h$ depends smoothly on these variables. Therefore, in local coordinates,
$$
\varphi_* F_h(v,\rho) = f_h(v_*, \partial v_*, \partial^2 v_*, \rho_*, \partial \rho_*, \partial^2 \rho_*),
$$
for a suitable smooth function $f_h$. Hence the mapping $F_h: X_{\gamma,\mu} \times V_\mu \to X_0$ is well defined (we recall that $\Sigma$ is compact and $X_{\gamma,\mu} \hookrightarrow C^2(\Sigma)$ holds by \eqref{eq:Assump_pq}).

Thus, we may write the equation \eqref{surfaceDiff_transf} in condensed form as
\begin{equation}\label{eq:rho_condensed}
\partial_t \rho = A_h(v,\rho)\rho + F_h(v,\rho),
\end{equation}
Let us now consider the transformed equation \eqref{CH-surface_transf} and define $$\mathcal{F}_c(v,\rho) := \operatorname{div}_\rho \left( m(v) \nabla_\rho \eta \right ),$$
where
\begin{align*}
\eta &= -\gamma\Delta_\rho v + \Psi'(v)+ \frac 12 \alpha'(v) (H_\rho-\bar{H}(v))^2  \\
&\quad - \alpha (v) (H_\rho-\bar{H}(v)) \bar{H}'(v) + G(\alpha^G)'(v).
\end{align*}
\begin{proposition}\label{pro:quasi_v}
Let $1<p<\infty$, $\mu\in (1/p,1]$ satisfy \eqref{eq:Assump_pq} and assume (R).
For $(v, \rho) \in X_1 \times (X_1 \cap V_\mu)$, it holds that
$$
\mathcal{F}_c(v,\rho) = A_c(v,\rho)v + B_h(v,\rho)\rho +Q(v,\rho)+ \text{lower order terms},
$$
where $A_c, B_h: X_{\gamma,\mu} \times V_\mu \to \mathcal{L}(X_1, X_0)$ have the following local coordinate representations:
$$
\varphi_* A_c(v,\rho) \hat{v} = \sum_{3 \le |\sigma| \le 4} a_\sigma^{c}(v_*, \partial v_*, \rho_*, \partial \rho_*, \partial^2 \rho_*) \partial^\sigma \hat{v}_*,
$$
and
$$
\varphi_* B_h(v,\rho) \hat{\rho} = \sum_{3 \le |\sigma| \le 4} b_\sigma^{h}(v_*, \partial v_*,\partial^2 v_* , \rho_*, \partial \rho_*, \partial^2 \rho_*) \partial^\sigma \hat{\rho}_*,
$$
with smooth functions $a_\sigma^c$ and $b_\sigma^h$. The leading order term of $A_c$ is given by
$$
\sum_{|\sigma|=4} a_\sigma^c(v_*, \rho_*, \partial \rho_*) \partial^\sigma \hat{v}_* = -\gamma m(v_*) \sum_{i,j,k,\ell} a^{k\ell}(\rho_*,\partial \rho_*) a^{ij}(\rho_*,\partial \rho_*) \partial_i \partial_j \partial_k \partial_\ell \hat{v}_*,
$$
with a positive definite matrix $(a^{ij})$. Moreover, for $X_\beta:=(X_0,X_1)_{\beta,p}$ and $\beta\in (3/4+n/(8q),1)$, the mapping $Q:X_\beta\times (V_\mu\cap X_\beta)\to X_0$
has the local coordinate representation
$$\varphi_*Q(v,\rho)=\sum_{|\sigma|=3}r_{\sigma}^h(v_*,\rho_*,\partial\rho_*)(\partial^\sigma\rho_*)^2,$$
for some smooth function $r_\sigma^h$.

The lower order terms involve derivatives of $v$ and $\rho$ of order at most $2$.
\end{proposition}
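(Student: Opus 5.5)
We expand $\mathcal{F}_c(v,\rho)=\operatorname{div}_\rho(m(v)\nabla_\rho\eta)$ in local coordinates, exactly as in the proof of Proposition \ref{pro:quasi_rho}. We then sort all terms according to which third and fourth order derivatives of $v$ and $\rho$ they contain. First we write
$$
\operatorname{div}_\rho(m(v)\nabla_\rho\eta)=m(v)\Delta_\rho\eta+m'(v)\,\nabla_\rho v\cdot\nabla_\rho\eta .
$$
We use the local representations $\Delta_\rho=a^{ij}(\rho,\partial\rho)\partial_i\partial_j+b^k(\rho,\partial\rho,\partial^2\rho)\partial_k$ and $H_\rho=c^{ij}(\rho,\partial\rho)\partial_i\partial_j\rho+h(\rho,\partial\rho)$. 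We also use that the Gaussian curvature is $G=\frac12(H_\rho^2-|L_\rho|^2)$, where $L_\rho$ depends on $\rho,\partial\rho,\partial^2\rho$ and is linear in $\partial^2\rho$ after multiplying by a smooth factor. It follows that $\eta$ has the form
$$
\eta=-\gamma\Delta_\rho v+g(v,\rho,\partial\rho,\partial^2\rho),
$$
with $g$ smooth and at most quadratic in $\partial^2\rho$.

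\emph{The $v$-part.} Applying $\Delta_\rho$ to $-\gamma\Delta_\rho v$ and using the expansion already computed for $\Delta_\rho H_\rho$ (with $c^{ij},h$ replaced by $a^{ij},b^k$), we obtain the leading part $-\gamma m(v)a^{k\ell}a^{ij}\partial_i\partial_j\partial_k\partial_\ell v$. The remaining terms are third order in $v$, with coefficients depending on $v,\rho,\partial\rho,\partial^2\rho$. Some of these coefficients also contain $\partial^3\rho$ through $\partial_\ell b^k$. Such a term $\partial_\ell b^k\,\partial_k v$ has $\partial^3\rho$ linearly multiplied by $\partial v$, so we place it into $B_h(v,\rho)\rho$. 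The term $m'(v)\nabla_\rho v\cdot\nabla_\rho(-\gamma\Delta_\rho v)$ is third order in $v$, with coefficient depending on $\partial v$. It is therefore absorbed into $A_c$; this is why $a^c_\sigma$ depends on $\partial v_*$. These terms define $A_c$, which is linear in $\hat v$ with the stated structure.

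\emph{The $\rho$-part.} Next we expand $m(v)\Delta_\rho g+m'(v)\nabla_\rho v\cdot\nabla_\rho g$ by the chain rule, as in the displayed computation for $\partial_k\partial_\ell(c^{ij}\rho_{ij}+h)$. Fourth derivatives of $\rho$ appear only through $\partial_{\rho_{ij}}g\cdot\rho_{ijk\ell}$, which is linear in $\partial^4\rho$. Its coefficient involves $v,\partial^2\rho$ and, through the factor $m'(v)\partial v$ and the $v$-dependence of $g$, also $\partial v$ and $\partial^2 v$. Third derivatives appear linearly in two ways. Terms of the form $\partial_v g_{\rho_{ij}}\,\partial v\,\rho_{ijk}$ and $g_{\rho_{ij}\rho_m}\rho_{mk}\rho_{ij\ell}$ are linear in $\partial^3\rho$ with coefficients of order $\le2$; these all go into $B_h(v,\rho)\rho$. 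The other way is quadratically, namely $g_{\rho_{ij}\rho_{mn}}\rho_{ijk}\rho_{mn\ell}$, from the quadratic dependence of $g$ on $\partial^2\rho$ through $H_\rho^2$ and $|L_\rho|^2$. Since the second derivative $g_{\rho_{ij}\rho_{mn}}$ only depends on $(v,\rho,\partial\rho)$, this gives exactly $Q$ after polarization. Strictly speaking the products are mixed, $\partial^\sigma\rho\,\partial^{\sigma'}\rho$; we interpret the stated sum in that sense. Everything else contains derivatives of order at most two and forms the lower order terms.

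\emph{Mapping property of $Q$.} This is the step we expect to be the main obstacle. We must show that $Q:X_\beta\times(V_\mu\cap X_\beta)\to X_0$ is well defined. Here $X_\beta=B^{4\beta}_{qp}\hookrightarrow H^{3}_{2q}(\Sigma)$ holds as soon as $4\beta-n/q>3-n/(2q)$, i.e. $\beta>3/4+n/(8q)$. This is precisely the assumed range and follows from Sobolev embedding for Besov spaces as in \eqref{eq:SobEmb1}. Hence $(\partial^3\rho)^2\in L_q$ by H\"older's inequality, and the coefficient $r^h_\sigma(v,\rho,\partial\rho)$ is bounded since $X_\beta\hookrightarrow C^1$. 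Finally, the well-definedness of $A_c,B_h$ as maps into $\mathcal{L}(X_1,X_0)$ for $(v,\rho)\in X_{\gamma,\mu}\times V_\mu$ follows from $X_{\gamma,\mu}\hookrightarrow C^2(\Sigma)$. The coefficients of $B_h$ require $\partial^2 v$, which is why $b^h_\sigma$ depends on $\partial^2 v_*$.
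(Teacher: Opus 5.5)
Your strategy follows the paper in outline: expand in local coordinates using the representations of $\Delta_\rho$ and $H_\rho$, show that third and fourth derivatives enter linearly except for one quadratic third-order term, and control $Q$ via $B^{4\beta}_{qp}(\Sigma)\hookrightarrow H^3_{2q}(\Sigma)$. You package it differently, though. The paper treats $H_\rho$ and $H_\rho^2$ separately and extracts $Q$ from $2|\nabla_\rho H_\rho|^2$ in $\Delta_\rho H_\rho^2=2H_\rho\Delta_\rho H_\rho+2|\nabla_\rho H_\rho|^2$ via \eqref{eq:nablaHrho}. You instead collect everything in $\eta$ except $-\gamma\Delta_\rho v$ into one $g$ quadratic in $\partial^2\rho$, and read $Q$ off from $m(v)a^{k\ell}\,\partial_{\rho_{ij}}\partial_{\rho_{mn}}g\,\rho_{ijk}\rho_{mn\ell}$. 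This is a genuine gain. Through $G=\frac12(H_\rho^2-|L_\rho|^2)$ it covers the term $G(\alpha^G)'(v)$ in $\eta$. The paper's proof does not list that term among the sources of third- and fourth-order terms, although $\Delta_\rho G$ is fourth order in $\rho$ and contains products of third derivatives. Your caveat about mixed products $\partial^\sigma\rho\,\partial^{\sigma'}\rho$ is also fair. The paper's $(\partial^\sigma\rho)^2$ is equally schematic, and only a bound by $|\partial^3\rho|^2$ is ever used. Note, though, that ``polarization'' does not literally produce diagonal squares.

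The weak spot is your $v$-part. The analogy with the expansion of $\Delta_\rho H_\rho$ breaks down because $b^k$, unlike $h$, depends on $\partial^2\rho$. Applying $\Delta_\rho$ to $a^{ij}\partial_{ij}v+b^k\partial_k v$ produces, besides $\partial_\ell b^k\,\partial_k v$, the terms $[\partial_m\partial_n b^k]\partial_k v$, $[\partial_m b^k]\partial_{nk}v$ and $[\partial_m\partial_n a^{ij}]\partial_{ij}v$. So it is not true that the remaining terms are all third order in $v$.

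The first of these contains $\partial_{\rho_{rs}}b^k\,\partial_m\partial_n\partial_r\partial_s\rho\,\partial_k v$, which is the fourth-order $\rho$-contribution of $\Delta_\rho^2 v$ to $B_h$. The other two give $\partial^3\rho$ times $\partial^2 v$. These terms, not the $v$-dependence of $g$ (which only yields $\partial v\,\partial^3\rho$), are why $b^h_\sigma$ depends on $\partial^2 v_*$.

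Most importantly, you must rule out products of two third derivatives of $\rho$ coming from $\partial_m\partial_n b^k$. Such terms would carry a factor $\partial v$ and a coefficient possibly depending on $\partial^2\rho$, so they would not fit the stated form of $Q$. As in the paper, this follows from $b^k$ being affine in $\partial^2\rho$, so that $\partial_{\rho_{rs}}\partial_{\rho_{tu}}b^k=0$. With this added, your argument is complete.
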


\begin{proof}
For $\rho \in X_1 \cap V_\mu$, we obtain
$$
\operatorname{div}_\rho \left(m(v) \nabla_\rho \eta \right) = \nabla_\rho m(v) \cdot \nabla_\rho \eta + m(v) \operatorname{div}_\rho \nabla_\rho \eta = \nabla_\rho m(v) \cdot \nabla_\rho \eta + m(v) \Delta_\rho \eta,
$$
where
\begin{multline*}
\eta = -\gamma\Delta_\rho v + \Psi'(v)+ \frac 12 \alpha'(v) (H_\rho-\bar{H}(v))^2  \\ - \alpha (v) (H_\rho-\bar{H}(v)) \bar{H}'(v) + G(\alpha^G)'(v).
\end{multline*}
The third- and fourth-order terms in $\mathcal{F}_c$ arise from $\nabla_\rho \Delta_\rho v$, $\nabla_\rho H_\rho^j$, $\Delta_\rho H_\rho^j$ and $\Delta_\rho^2 v$, where $j\in\{1,2\}$. All other terms in $\mathcal{F}_c$ involve at most second-order derivatives of $v$ and $\rho$. Let us note that the terms $\nabla_\rho H_\rho$ as well as $\Delta_\rho H_\rho$ have already been investigated in the proof of Proposition \ref{pro:quasi_rho}.

We will now consider the terms $\nabla_\rho\Delta_\rho v$ and $\Delta_\rho^2 v$.
Suppressing the $*$ subscript for readability, we have:
$$
\begin{aligned}
&\partial_\ell \left( a^{ij}(\rho, \partial \rho)\, \partial_i \partial_j v
+ b^k(\rho, \partial \rho, \partial^2 \rho)\, \partial_k v \right) \\
&= \left[ \frac{\partial a^{ij}}{\partial \rho}\, \partial_\ell \rho + \frac{\partial a^{ij}}{\partial (\partial_k \rho)}\, \partial_\ell \partial_k \rho \right] \partial_i \partial_j v
+ a^{ij} \partial_\ell \partial_i \partial_j v \\
&\quad + \left[ \frac{\partial b^k}{\partial \rho} \partial_\ell \rho + \frac{\partial b^k}{\partial (\partial_m \rho)} \partial_\ell \partial_m \rho + \frac{\partial b^k}{\partial (\partial_m \partial_n \rho)} \partial_\ell \partial_m \partial_n \rho \right] \partial_k v \\
&\quad + b^k \partial_\ell \partial_k v,
\end{aligned}
$$
showing that $\nabla_\rho \Delta_\rho v$ depends only linearly on third-order derivatives of $v$ and $\rho$. Next, we compute
$$
\begin{aligned}
&\partial_m\partial_n\Big(
    a^{ij}(\rho,\partial\rho)\,\partial_i\partial_jv
    + b^k(\rho,\partial\rho,\partial^2\rho)\,\partial_k v
\Big) \\
&= \Big[\partial_m\partial_n\, a^{ij}(\rho,\partial\rho)\Big]\, \partial_i\partial_j v
  + \Big[\partial_m\, a^{ij}(\rho,\partial\rho)\Big]\, \partial_n\partial_i\partial_j v\\
&\quad + \Big[\partial_n\, a^{ij}(\rho,\partial\rho)\Big]\, \partial_m\partial_i\partial_j v
  + a^{ij}(\rho,\partial\rho)\, \partial_m\partial_n\partial_i\partial_j v \\
& \quad
  + \Big[\partial_m\partial_n\, b^k(\rho,\partial\rho,\partial^2\rho)\Big]\, \partial_k v
  + \Big[\partial_m\, b^k(\rho,\partial\rho,\partial^2\rho)\Big]\, \partial_n\partial_k v\\
&\quad  + \Big[\partial_n\, b^k(\rho,\partial\rho,\partial^2\rho)\Big]\, \partial_m\partial_k v
  + b^k(\rho,\partial\rho,\partial^2\rho)\, \partial_m\partial_n\partial_k v
\end{aligned}
$$
It follows that the leading order contribution to $\Delta_\rho^2 v$ with respect to $v$ in local coordinates is
$$
a^{k\ell}(\rho, \partial \rho) a^{ij}(\rho, \partial \rho) \partial_k \partial_\ell \partial_i \partial_j v,
$$
while the leading order term of $\Delta_\rho^2 v$ with respect to $\rho$ originates from
$$
\Big[\partial_m\partial_n\, b^k(\rho,\partial\rho,\partial^2\rho)\Big]\partial_k v,
$$
which reads (in local coordinates)
$$
\left[\frac{\partial b^k}{\partial (\partial_r\partial_s\rho)}\, \partial_m\partial_n\partial_r\partial_s \rho\right]\partial_k v.
$$
From the above, it is clear that third-order terms in $v$ and $\rho$ appear at most linearly in $\Delta_\rho^2 v$, since $b^k$ is linear in $\partial^2\rho$.

In a last step, we treat the terms $\nabla_\rho H_\rho^2$ and $\Delta_\rho H_\rho^2$. Since $\nabla_\rho H_\rho^2=2 H_\rho\nabla_\rho H_\rho$, it follows from the proof of Proposition \ref{pro:quasi_rho} that $\nabla_\rho H_\rho^2$ is of order 3 with third-order derivatives appearing at most linearly, since $H_\rho$ is of order two. Next, we compute
$$\Delta_\rho H_\rho^2=2 H_\rho\Delta_\rho H_\rho+2|\nabla_\rho H_\rho|^2.$$
Again, by the proof of Proposition \ref{pro:quasi_rho}, the third- and fourth-order derivatives of $\rho$ appear only linearly in $\Delta_\rho H_\rho$. By \eqref{eq:nablaHrho}, it holds that
$$\varphi_*|\nabla_\rho H_\rho|^2=\sum_{|\sigma|=3}r_{\sigma,1}^h(\rho_*,\partial\rho_*)(\partial^\sigma\rho_*)^2
+\sum_{|\sigma|=3} r_{\sigma,2}^h(\rho_*, \partial \rho_*, \partial^2 \rho_*) \partial^\sigma\rho_*+l.o.t.$$
for some smooth functions $r_{\sigma,j}^h$, $j\in\{1,2\}$.
For $X_\beta=(X_0,X_1)_{\beta,p}=B_{qp}^{4\beta}(\Sigma)$, $\beta\in (0,1)$, and $4\beta-n/q>3-n/2q$, it holds that
\begin{equation}\label{eq:SobEmb2}
B_{qp}^{4\beta}(\Sigma)\hookrightarrow H_{2q}^3(\Sigma),
\end{equation}
hence
$$\|(\nabla_\Sigma^3\rho)^2\|_{X_0}\lesssim\|\rho\|_{H_{2q}^3(\Sigma)}^2\lesssim \|\rho\|_{X_\beta}^2.$$
We note that the above conditions on $\beta$ are equivalent to $\beta\in (3/4+n/(8q),1)$ and the interval is not empty by \eqref{eq:Assump_pq}. Furthermore, the embedding \eqref{eq:SobEmb2} follows again from \cite[Theorem 4.6.1]{Tri78} in combination with \cite[Theorem 4.4]{Ama25}.

Summarizing, we have shown that
$$
\mathcal{F}_c(v,\rho) = A_c(v,\rho)v + B_h(v,\rho)\rho +Q(v,\rho)+ \text{lower order terms},
$$
where $A_c,B_h$ and $Q$ satisfy the representations given above, and the lower order terms contain only derivatives of $v$ and $\rho$ of order at most $2$.

%\begin{align*}
%\Delta_\rho\left(\alpha'(v)(H_\rho-\bar{H}(v))^2\right)&=(H_\rho-\bar{H}(v))^2\Delta_\rho \alpha'(v)\\
%&+2\nabla_\rho\alpha'(v)\cdot\nabla_\rho (H_\rho-\bar{H}(v))^2+\alpha'(v)\Delta_\rho (H_\rho-\bar{H}(v))^2.
%\end{align*}
%The first term on the right side is purely of second order in $v$ and $\rho$. Since
%$$\nabla_\rho (H_\rho-\bar{H}(v))^2=2(H_\rho-\bar{H}(v))\nabla_\rho (H_\rho-\bar{H}(v)),$$
%the second term contains of $v$
\end{proof}

\noindent
According to Proposition \ref{pro:quasi_v},
$$
F_c(v,\rho) := \mathcal{F}_c(v,\rho) - A_c(v,\rho)v - B_h(v,\rho)\rho-Q(v,\rho)
$$
depends smoothly on derivatives of $v$ and $\rho$ up to order $2$. Hence, in local coordinates,
$$
F_{c}(v,\rho) = f_c(v, \partial v, \partial^2 v, \rho, \partial \rho, \partial^2 \rho)
$$
for some smooth function $f_c$. Thus, the mapping $F_{c}: X_{\gamma,\mu} \times V_\mu \to X_0$ is well defined (again, since $\Sigma$ is compact and $X_{\gamma,\mu} \hookrightarrow C^2(\Sigma)$ by \eqref{eq:Assump_pq}).

Lastly, let us address the term
$$
\partial_t\rho\left( v\beta(\rho) H_\rho + (\nu_\Sigma | \nabla_\rho v) \right)
$$
in \eqref{CH-surface_transf}. Note that $G(v,\rho) := \left( v\beta(\rho) H_\rho + (\nu_\Sigma | \nabla_\rho v) \right)$ includes derivatives of $\rho$ up to order $2$ and derivatives of $v$ up to order $1$, so the mapping $G: X_{\gamma,\mu} \times V_\mu \to X_0$ is well defined by \eqref{eq:Assump_pq} and by the compactness of $\Sigma$.

Substituting $\partial_t\rho$ using equation \eqref{eq:rho_condensed}, we obtain
$$
\partial_t\rho \left( v\beta(\rho) H_\rho + (\nu_\Sigma | \nabla_\rho v) \right) = G(v,\rho) \left( A_h(v,\rho)\rho + F_h(v,\rho) \right).
$$
Therefore, we can reformulate \eqref{CH-surface_transf} and \eqref{surfaceDiff_transf} in condensed, abstract form as
\begin{align}\label{eq:rho_and_v_condensed}
\begin{split}
\partial_t v &= A_c(v,\rho)v + C_h(v,\rho)\rho + F_c(v,\rho)+Q(v,\rho) + G(v,\rho) F_h(v,\rho), \\
\partial_t \rho &= A_h(v,\rho)\rho + F_h(v,\rho),
\end{split}
\end{align}
where $C_h: X_{\gamma,\mu} \times V_\mu \to \mathcal{L}(X_1, X_0)$ is defined by
$$
C_h(v,\rho)\hat{\rho} := B_h(v,\rho)\hat{\rho} + G(v,\rho)A_h(v,\rho)\hat{\rho}.
$$

\section{Local well-posedness}\label{Local well-posedness}

We assume that $p, q \in (1, \infty)$ and $\mu \in (1/p, 1]$ satisfy \eqref{eq:Assump_pq}, so that the embedding
$$
X_{\gamma,\mu} = B_{qp}^{4\mu - 4/p}(\Sigma) \hookrightarrow C^2(\Sigma)
$$
is available to us. Furthermore, we assume that $\beta\in (3/4+n/(8q),1)$.
Recall the definition
$$
V_\mu = \{ \rho \in X_{\gamma,\mu} \mid \| \rho \|_{L_\infty(\Sigma)} < r \}.
$$
Now, we define the mappings $A: X_{\gamma,\mu} \times V_\mu \to \mathcal{L}(X_1 \times X_1, X_0 \times X_0)$, $F_1: X_{\gamma,\mu} \times V_\mu \to X_0 \times X_0$ and $F_2:X_\beta\times (V_\mu\cap X_\beta)\to X_0$ by
$$
A(v, \rho) :=
\begin{pmatrix}
A_c(v, \rho) & C_h(v, \rho) \\
0 & A_h(v, \rho)
\end{pmatrix},
$$
$$
F_1(v, \rho) :=
\begin{pmatrix}
F_c(v, \rho) + G(v, \rho) F_h(v, \rho) \\
F_h(v, \rho)
\end{pmatrix}
$$
and
$$
F_2(v, \rho) :=
\begin{pmatrix}
Q(v, \rho) \\
0
\end{pmatrix}.
$$

With these definitions, and setting $z = (v, \rho)$, we can reformulate \eqref{eq:rho_and_v_condensed} as the quasilinear parabolic evolution equation
\begin{equation}\label{eq:quasilin}
\partial_t z = A(z) z + F_1(z)+F_2(z),
\end{equation}
subject to the initial value $z(0) = z_0 = (v_0, \rho_0) \in X_{\gamma,\mu} \times V_\mu$. We claim that for any $v_0 \in X_{\gamma,\mu}$, the operator $A(z_0) = A(v_0, 0)$ possesses the property of $L_{p,\mu}$-maximal regularity in $X_0\times X_0$.

\begin{proposition}
Let $1 < p < \infty$, $\mu \in (1/p, 1]$ satisfy \eqref{eq:Assump_pq}, $0 < T < \infty$, and assume conditions (R) \& (P). Then, for any $v_0 \in X_{\gamma,\mu}$, the operator $A(v_0, 0)$ has maximal $L_{p,\mu}$-regularity in $X_0 \times X_0$. More precisely, for any $f \in L_{p,\mu}((0,T); X_0 \times X_0)$ there exists a unique solution
$$
z \in H_{p,\mu}^1((0,T); X_0 \times X_0) \cap L_{p,\mu}((0,T); X_1 \times X_1)
$$
to the linear problem
\begin{equation}\label{eq:MRlin1}
\partial_t z = A(v_0, 0) z + f, \quad t \in (0, T), \qquad z(0) = 0.
\end{equation}
\end{proposition}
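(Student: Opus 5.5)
The plan is to use the upper triangular structure of $A(v_0,0)$ and solve the two scalar problems one after the other. Write $z=(v,\rho)$ and $f=(f_1,f_2)$. Problem \eqref{eq:MRlin1} then splits into
\begin{equation*}
\partial_t\rho = A_h(v_0,0)\rho + f_2,\qquad \partial_t v = A_c(v_0,0)v + C_h(v_0,0)\rho + f_1,
\end{equation*}
with $\rho(0)=v(0)=0$.

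First I show that each diagonal operator $A_h(v_0,0)$ and $A_c(v_0,0)$ has maximal $L_{p,\mu}$-regularity in $X_0=L_q(\Sigma)$. Since $X_{\gamma,\mu}\hookrightarrow C^2(\Sigma)$ by \eqref{eq:SobEmb1}, the coefficients $a_\sigma^h(v_0,\partial v_0,0,0,0)$ and $a_\sigma^c(v_0,\partial v_0,0,0,0)$ in Propositions \ref{pro:quasi_rho} and \ref{pro:quasi_v} are at least continuous on $\Sigma$ for $|\sigma|=4$ and bounded for $|\sigma|=3$. By those propositions the principal parts in local coordinates are
\begin{equation*}
-\beta(0)^{-1}\alpha(v_0)\,a^{k\ell}c^{ij}\partial_i\partial_j\partial_k\partial_\ell \quad\text{and}\quad -\gamma\, m(v_0)\,a^{k\ell}a^{ij}\partial_i\partial_j\partial_k\partial_\ell,
\end{equation*}
with $(a^{ij}),(c^{ij})$ positive definite. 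Their symbols are
\begin{equation*}
-\beta(0)^{-1}\alpha(v_0)\,(a\xi\cdot\xi)(c\xi\cdot\xi)\quad\text{and}\quad -\gamma\, m(v_0)\,(a\xi\cdot\xi)^2.
\end{equation*}
By (P) both are strictly negative and bounded above by $-\kappa|\xi|^4$ uniformly on the compact $\Sigma$. So both operators are uniformly (normally) elliptic of order four with continuous top-order coefficients and bounded lower-order coefficients. I then use the standard localization and perturbation argument on compact manifolds: freeze the coefficients locally, apply the $H^\infty$-calculus/$\mathcal R$-sectoriality of constant-coefficient parameter-elliptic operators in $L_q(\R^n)$, and patch with a partition of unity. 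This gives maximal $L_p$-regularity on finite intervals for the shifted operators. Because $T<\infty$, no shift is needed (by exponential rescaling). By \cite{PruSim16} (Theorem 3.5.4 there), maximal $L_p$-regularity is equivalent to maximal $L_{p,\mu}$-regularity for every $\mu\in(1/p,1]$.

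I expect this localization step to be the main obstacle. The top-order coefficients are only continuous, not smooth, so I would invoke directly the general result for elliptic operators with $C(\Sigma)$ top coefficients from \cite[Ch.~6]{PruSim16} rather than redo the perturbation argument. The lower-order terms of order three are handled as relatively bounded perturbations with small relative bound, using interpolation $\|\cdot\|_{H^3_q}\le\epsilon\|\cdot\|_{H^4_q}+C_\epsilon\|\cdot\|_{L_q}$.

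Second, the coupling. I first solve the $\rho$-equation, which gives a unique $\rho\in H^1_{p,\mu}((0,T);X_0)\cap L_{p,\mu}((0,T);X_1)$. Since $C_h(v_0,0)\in\mathcal L(X_1,X_0)$ with time-independent coefficients (they depend only on $v_0,\partial v_0,\partial^2v_0\in C(\Sigma)$), we get $C_h(v_0,0)\rho\in L_{p,\mu}((0,T);X_0)$. I then solve the $v$-equation with right-hand side $f_1+C_h(v_0,0)\rho$ using the maximal regularity of $A_c(v_0,0)$. Uniqueness follows the same way: if $f=0$, then first $\rho=0$ and then $v=0$. Together this gives existence and uniqueness of $z$ in the stated class, which proves the proposition.
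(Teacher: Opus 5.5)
Your proposal is correct and follows the same route as the paper. You use the upper triangular structure and first solve the $\rho$-equation, using parameter-ellipticity of $-A_h(v_0,0)$ (from $\alpha>0$ and positive definiteness of $(a^{k\ell}),(c^{ij})$) together with \cite[Theorem 6.4.3(ii)]{PruSim16}; you then treat $C_h(v_0,0)\rho$ as an $L_{p,\mu}$ forcing term in the $v$-equation, whose operator $-A_c(v_0,0)$ is parameter-elliptic by $m>0$. Your extra remarks on continuous top-order coefficients, the finite-interval shift and the passage from $L_p$ to $L_{p,\mu}$ maximal regularity only spell out what the paper gets by citing that theorem directly.
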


\begin{proof}
Due to the triangular structure of $A(v, \rho)$, equation \eqref{eq:MRlin1} can be rewritten as
\begin{equation}\label{eq:MRlin2}
\partial_t v = A_c(v_0, 0) v + C_h(v_0, 0) \rho + f_c, \quad t \in (0, T), \qquad v(0) = 0,
\end{equation}
and
\begin{equation}\label{eq:MRlin3}
\partial_t \rho = A_h(v_0, 0) \rho + f_h, \quad t \in (0,T), \qquad \rho(0) = 0,
\end{equation}
where $f = (f_c, f_h)$. By Proposition \ref{pro:quasi_rho}, the operator $A_h(v_0, 0)$ is locally given by
$$
\varphi_* A_h(v_0, 0) \hat{\rho} = \sum_{3 \leq |\sigma| \leq 4} a_\sigma^h(\varphi_* v_0, \partial(\varphi_* v_0)) \partial^\sigma \hat{\rho}_*,
$$
with leading order term
$$
\sum_{|\sigma| = 4} a_\sigma^h(\varphi_* v_0) \partial^\sigma \hat{\rho}_* = -\frac{1}{\beta(0)} \alpha(\varphi_* v_0)
\sum_{i,j,k,\ell} a^{k\ell} c^{ij} \partial_i \partial_j \partial_k \partial_\ell \hat{\rho}_*,
$$
where $a^{k\ell} = a^{k\ell}(0, 0)$ and $c^{ij} = c^{ij}(0, 0)$. The local principal symbols of $A_h(v_0, 0)$ are given by
$$
-\frac{1}{\beta(0)} \alpha(\varphi_* v_0(x))
\sum_{i,j,k,\ell} a^{k\ell} c^{ij} \xi_i \xi_j \xi_k \xi_\ell,
$$
where $(x, \xi) \in \mathbb{R}^n \times \mathbb{R}^n$ with $\|x\|_2 \le 1$ and $\|\xi\|_2 = 1$. Since $\beta(0) = 1$, $\alpha > 0$ and the matrices $(a^{k\ell})$ and $(c^{ij})$ are symmetric and positive definite, it follows that $-A_h(v_0, 0)$ is parameter-elliptic of angle $0$ in the sense of \cite[Definition 6.4.1]{PruSim16}. Hence, $-A_h(v_0, 0)$ is normally elliptic and thus, by \cite[Theorem 6.4.3(ii)]{PruSim16}, there exists a unique solution
$$
\tilde{\rho} \in H_{p,\mu}^1((0,T); X_0) \cap L_{p,\mu}((0,T); X_1)
$$
for \eqref{eq:MRlin3}.

Substituting $\tilde{\rho}$ into \eqref{eq:MRlin2} results in the equation
\begin{equation}\label{eq:MRlin4}
\partial_t v = A_c(v_0, 0) v + \tilde{f}_c, \quad t \in (0, T), \qquad v(0) = 0,
\end{equation}
where $\tilde{f}_c := C_h(v_0, 0) \tilde{\rho} + f_c \in L_{p,\mu}((0,T); X_0)$ due to $v_0 \in X_{\gamma, \mu} \hookrightarrow C^2(\Sigma)$ and \eqref{eq:Assump_pq}. By Proposition \ref{pro:quasi_v}, in local coordinates, $A_c(v_0, 0)$ acts as
$$
\varphi_* A_c(v_0, 0) \hat{v} = \sum_{3 \leq |\sigma| \leq 4} a_\sigma^c(\varphi_* v_0, \partial(\varphi_* v_0)) \partial^\sigma \hat{v}_*,
$$
with leading order term
$$
\sum_{|\sigma| = 4} a_\sigma^c(\varphi_* v_0)\partial^\sigma \hat{v}_* = -\gamma m(\varphi_* v_0) \sum_{i, j, k, \ell} a^{k\ell} a^{ij} \partial_i \partial_j \partial_k \partial_\ell \hat{v}_*,
$$
where $a^{k\ell} = a^{k\ell}(0, 0)$. Thus, the local principal symbols are
$$
-\gamma m(\varphi_* v_0(x)) \sum_{i,j,k,\ell} a^{k\ell} a^{ij} \xi_i \xi_j \xi_k \xi_\ell,
$$
where $(x, \xi) \in \mathbb{R}^n \times \mathbb{R}^n$ as above. Since $m > 0$ and $(a^{k\ell})$ is symmetric positive definite, $-A_c(v_0, 0)$ is parameter-elliptic of angle $0$ in the sense of \cite[Definition 6.4.1]{PruSim16}. Therefore, \cite[Theorem 6.4.3(ii)]{PruSim16} ensures the existence of a unique solution
$$
\tilde{v} \in H_{p,\mu}^1((0,T); X_0) \cap L_{p,\mu}((0,T); X_1)
$$
for \eqref{eq:MRlin4}. Collecting our results, we conclude that the pair
$$
\tilde{z} := (\tilde{v}, \tilde{\rho}) \in H_{p,\mu}^1((0,T); X_0 \times X_0) \cap L_{p,\mu}((0,T); X_1 \times X_1)
$$
is the unique solution of \eqref{eq:MRlin1}.
\end{proof}

Since both $A$ and $F_1$ depend smoothly on derivatives of $(v, \rho)$ up to second order, and since $X_{\gamma,\mu} \hookrightarrow C^2(\Sigma)$ by \eqref{eq:Assump_pq}, we readily obtain
$$
A \in C^{1-}(X_{\gamma,\mu} \times V_\mu; \mathcal{L}(X_1 \times X_1, X_0 \times X_0)),
$$
as well as
$$
F_1 \in C^{1-}(X_{\gamma,\mu} \times V_\mu; X_0 \times X_0),
$$
since $\Sigma$ is compact. Concerning the nonlinearity $F_2(v,\rho)=(Q(v,\rho),0)^{\sf T}$, we show that there exist numbers $\beta\in (\mu-1/p,1)$, $\alpha_j\ge0$ and $\beta_j\in [\mu-1/p,\beta]$ with
\begin{equation}\label{LWP:AssF_20}
\frac{\alpha_j(\beta-\mu+1/p)+\beta_j-\mu+1/p}{1-\mu+1/p}<1,
\end{equation}
for $j\in\{1,2\}$, so that for each $\hat{v}\in X_{\gamma,\mu}$ and $R>0$ with $\bar{B}_R^{X_{\gamma,\mu}^2}(\hat{v},0)\subset X_{\gamma,\mu}\times V_\mu$ there exists $C_R>0$ such that the Gagliardo--Nirenberg-type estimate
\begin{multline}\label{LWP:AssF_2}
\|F_2(v_1,\rho_1)-F_2(v_2,\rho_2)\|_{X_0^2}\\
\le C_R\sum_{j=1}^2(1+\|(v_1,\rho_1)\|_{X_{\beta}^2}^{\alpha_j}+\|(v_2,\rho_2)\|_{X_{\beta}^2}^{\alpha_j})
\|(v_1-v_2,\rho_1-\rho_2)\|_{X_{\beta_j}^2},
\end{multline}
holds for all $(v_1,\rho_1),(v_2,\rho_2)\in \bar{B}_R^{X_{\gamma,\mu}^2}(\hat{v},0)\cap X_\beta^2$, where $X_{\beta_j}=(X_0,X_1)_{\beta_j,p}$ and $Y^2:=Y\times Y$ for $Y\in \{X_0,X_\beta,X_{\gamma,\mu}\}$.

Indeed, we recall from Proposition \ref{pro:quasi_v} the local representation
$$\varphi_*Q(v,\rho)=\sum_{|\sigma|=3}r_{\sigma}^h(v_*,\rho_*,\partial\rho_*)(\partial^\sigma\rho_*)^2,$$
with a smooth function $r_\sigma^h$. For the sake of readability, we drop the index $*$ in the following estimate. For all $(v_1,\rho_1),(v_2,\rho_2)\in \bar{B}_R^{X_{\gamma,\mu}^2}(\hat{v},0)\cap X_\beta^2$ and for each local chart $(U,\varphi)$, it holds that
\begin{align*}
\|r_{\sigma}^h(v_1,&\rho_1,\partial\rho_1)(\partial^\sigma\rho_1)^2
-r_{\sigma}^h(v_2,\rho_2,\partial\rho_2)(\partial^\sigma\rho_2)^2\|_{L_q(\varphi(U))}\\
&\le \sum_{|\sigma|=3}\|r_{\sigma}^h(v_1,\rho_1,\partial\rho_1)\|_{L_\infty(\varphi(U))}\|(\partial^\sigma\rho_1)^2
-(\partial^\sigma\rho_2)^2\|_{L_q(\varphi(U))}\\
&\quad + \sum_{|\sigma|=3}\|(\partial^\sigma\rho_2)^2\|_{L_q(\varphi(U))}
\|r_{\sigma}^h(v_1,\rho_1,\partial\rho_1)-r_{\sigma}^h(v_2,\rho_2,\partial\rho_2)\|_{L_\infty(\varphi(U))}\\
&\le C_R(\|\rho_1\|_{H_{2q}^3(\Sigma)}+\|\rho_2\|_{H_{2q}^3(\Sigma)})\|\rho_1-\rho_2\|_{H_{2q}^3(\Sigma)}\\
&\quad + C_R\|\rho_2\|_{H_{2q}^3(\Sigma)}^2\left(\|(v_1-v_2\|_{X_{\gamma,\mu}}+\|\rho_1-\rho_2)\|_{X_{\gamma,\mu}}\right),
\end{align*}
since $X_{\gamma,\mu}\hookrightarrow C^2(\Sigma)$ by \eqref{eq:Assump_pq}. With the help of a finite partition of unity ($\Sigma$ is compact), it then follows that
\begin{align*}
\|Q(v_1,\rho_1)-Q(v_2,\rho_2)\|_{L_q(\Sigma)}&\le C_R(\|\rho_1\|_{H_{2q}^3(\Sigma)}+\|\rho_2\|_{H_{2q}^3(\Sigma)})\|\rho_1-\rho_2\|_{H_{2q}^3(\Sigma)}\\
&\quad + C_R\|\rho_2\|_{H_{2q}^3(\Sigma)}^2\left(\|(v_1-v_2\|_{X_{\gamma,\mu}}+\|\rho_1-\rho_2)\|_{X_{\gamma,\mu}}\right),
\end{align*}
with some modified constant $C_R>0$. As in the proof of Proposition \ref{pro:quasi_v}, for $\beta\in (3/4+n/(8q),1)$, we have the embedding
$$X_\beta=B_{qp}^{4\beta}(\Sigma)\hookrightarrow H_{2q}^3(\Sigma)$$
at our disposal. This yields the estimate \eqref{LWP:AssF_2} for the choice $(\alpha_1,\beta_1)=(1,\beta)$ and $(\alpha_2,\beta_2)=(2,\mu-1/p)$, since $X_{\beta_j}\hookrightarrow X_{\gamma,\mu}$, by abstract interpolation theory.
For $j\in\{1,2\}$, the condition \eqref{LWP:AssF_20} then reads
\begin{equation}\label{LWP:beta}
\beta<\frac{1}{2}(1+\mu-1/p).
\end{equation}
From now on, let us choose
\begin{equation}\label{LWP:beta2}
\beta\in (3/4+n/(8q),1)\cap (\mu-1/p,1).
\end{equation}
We note that this intersection is not empty and moreover it holds that
$$\max\left\{\frac{3}{4}+\frac{n}{8q},\mu-1/p\right\}<\frac{1}{2}(1+\mu-1/p).$$
by assumption \eqref{eq:Assump_pq}. Thus, it is possible to find a number $\beta$ satisfying the conditions \eqref{LWP:beta} and \eqref{LWP:beta2}.

Therefore, we are in a position to apply \cite[Theorem 2.1]{LPW14} to equation \eqref{eq:quasilin}, yielding the following result.

\begin{theorem}\label{thm:LWP}
Let $p, q \in (1, \infty)$ and $\mu \in (1/p, 1]$ satisfy \eqref{eq:Assump_pq}, and assume (R) and (P). Suppose $v_0 \in B_{qp}^{4\mu - 4/p}(\Sigma)$. Then there exist $T = T(v_0) > 0$ and $\varepsilon = \varepsilon(v_0) > 0$ such that \eqref{CH-surface_transf} and \eqref{surfaceDiff_transf} admit a unique solution
$$
v, \rho \in H_{p,\mu}^1((0,T); L_q(\Sigma)) \cap L_{p,\mu}((0,T); H_q^4(\Sigma))\cap C([0,T];B_{qp}^{4\mu - 4/p}(\Sigma))
$$
for any $\rho_0 \in B_{qp}^{4\mu - 4/p}(\Sigma)$ with $\|\rho_0\|_{B_{qp}^{4\mu - 4/p}(\Sigma)}<\varepsilon$. Furthermore, it holds that $\|\rho(t)\|_{L_\infty(\Sigma)}<r$ for all $t\in [0,T]$, where $r>0$ is from Section \ref{sec:GS}.
\end{theorem}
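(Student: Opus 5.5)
The plan is to apply \cite[Theorem 2.1]{LPW14} to the abstract quasilinear problem \eqref{eq:quasilin}, $\partial_t z = A(z)z + F_1(z) + F_2(z)$, with $z_0=(v_0,\rho_0)$. I would then transfer the resulting solution back to \eqref{CH-surface_transf}--\eqref{surfaceDiff_transf}.

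The hypotheses of that theorem have essentially been verified above, and I would collect them as follows.
\begin{itemize}
\item[(i)] The map $A$ lies in $C^{1-}(X_{\gamma,\mu}\times V_\mu;\mathcal L(X_1^2,X_0^2))$ and $F_1\in C^{1-}(X_{\gamma,\mu}\times V_\mu;X_0^2)$. This follows from the embedding \eqref{eq:SobEmb1} and the smooth dependence on derivatives of order at most two.
\item[(ii)] The nonlinearity $F_2$ satisfies the Gagliardo--Nirenberg-type estimate \eqref{LWP:AssF_2}. The exponents are $(\alpha_1,\beta_1)=(1,\beta)$ and $(\alpha_2,\beta_2)=(2,\mu-1/p)$, with $\beta$ chosen according to \eqref{LWP:beta} and \eqref{LWP:beta2}, so that \eqref{LWP:AssF_20} holds.
\item[(iii)] Maximal $L_{p,\mu}$-regularity is needed for $A(z_0)$.
\end{itemize}

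Point (iii) is where the smallness of $\rho_0$ enters. The preceding proposition only gives maximal regularity for $A(v_0,0)$, not for $A(v_0,\rho_0)$. I would handle this in one of two ways.

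The first option is to apply \cite[Theorem 2.1]{LPW14} in a version where maximal regularity is required only at a reference point $\hat z=(v_0,0)$. This is possible because the theorem is formulated on balls $\bar B_R(\hat v,0)$.

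The second option is a perturbation argument. Since $A$ is continuous into $\mathcal L(X_1^2,X_0^2)$, we have $\|A(v_0,\rho_0)-A(v_0,0)\|_{\mathcal L(X_1^2,X_0^2)}$ small once $\|\rho_0\|_{X_{\gamma,\mu}}<\varepsilon$. Maximal regularity is stable under small perturbations in $\mathcal L(X_1,X_0)$, so $A(v_0,\rho_0)$ also has maximal regularity.

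Alternatively, one can redo the triangular argument of the previous proposition with $\rho_0$ in place of $0$. The principal symbols remain parameter-elliptic because $\beta(\rho_0)>0$ and $(a^{k\ell}(\rho_0,\partial\rho_0))$, $(c^{ij}(\rho_0,\partial\rho_0))$ stay positive definite. The coefficients are continuous since $\rho_0\in C^2(\Sigma)$. This actually works for every $\rho_0\in V_\mu$, and $\varepsilon$ is then needed only for $\rho_0\in V_\mu$ and the invariance statement.

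With (i)--(iii), \cite[Theorem 2.1]{LPW14} produces $T>0$ and a unique solution $z=(v,\rho)$ in $H^1_{p,\mu}((0,T);X_0^2)\cap L_{p,\mu}((0,T);X_1^2)$. By \cite[Theorem 3.4.8]{PruSim16}, this space embeds into $C([0,T];X_{\gamma,\mu}^2)$.

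Next I would check the invariance $\|\rho(t)\|_{L_\infty(\Sigma)}<r$. Choosing $\varepsilon$ small gives $\|\rho_0\|_{L_\infty}\le C\varepsilon<r/2$. Continuity of $t\mapsto\rho(t)$ in $X_{\gamma,\mu}\hookrightarrow C(\Sigma)$ then gives the bound after possibly shrinking $T$. Hence $z(t)$ stays in $X_{\gamma,\mu}\times V_\mu$ throughout.

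Finally, I would note that \eqref{eq:quasilin} is equivalent to \eqref{CH-surface_transf}--\eqref{surfaceDiff_transf}. The only substitution made was replacing $\partial_t\rho$ by $A_h\rho+F_h$, which is legitimate since the second equation holds a.e. Uniqueness transfers the same way.

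The main obstacle is the uniform control needed to make the local existence time depend only on $v_0$, i.e. $T=T(v_0)$ uniformly for $\|\rho_0\|<\varepsilon$. I would obtain this by working on the fixed ball around $(v_0,0)$: the maximal regularity constants of $A(v_0,0)$ and the constants $C_R$ depend only on $v_0$ and $R$.
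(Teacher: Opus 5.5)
Your proposal is correct and follows the paper's route. The paper also collects (i)--(iii), verifies maximal regularity only at the reference point $(v_0,0)$, and invokes \cite[Theorem 2.1]{LPW14}, which is formulated exactly as in your first option: it yields $T(v_0)$, $\varepsilon(v_0)$ and solutions in $C([0,T];X_{\gamma,\mu}\times V_\mu)$, uniformly for all initial data in a small $X_{\gamma,\mu}$-ball around that point. So your separate invariance step with a $\rho_0$-dependent shrinking of $T$ is unnecessary, and taken literally it would conflict with $T=T(v_0)$.
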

\begin{remark}
Under the stronger assumption
\begin{equation}\label{eq:Assump_pq_2}
\frac{4}{p}+\frac{n}{q}<1\quad\text{and}\quad 3+\frac{4}{p}+\frac{n}{q}<4\mu
\end{equation}
on $p,q\in (1,\infty)$ and $\mu\in (1/p,1]$, compared to \eqref{eq:Assump_pq}, it holds that
$$X_{\gamma,\mu}=B_{qp}^{4\mu-4/p}(\Sigma)\hookrightarrow C^3(\Sigma),$$
hence
$$
A \in C^{1-}(X_{\gamma,\mu} \times V_\mu; \mathcal{L}(X_1 \times X_1, X_0 \times X_0)),
$$
as well as
$$
F_1,F_2 \in C^{1-}(X_{\gamma,\mu} \times V_\mu; X_0 \times X_0).
$$
Therefore, one may apply \cite[Theorem 2.1]{KPW10} to \eqref{eq:quasilin}, without justification of a Gagliardo-Nirenberg-type estimate for $F_2$, leading to the same assertion as in Theorem \ref{thm:LWP} but under the stronger condition \eqref{eq:Assump_pq_2}.
\end{remark}

We conclude this section with a short discussion of the trace spaces $B_{qp}^{4\mu - 4/p}(\Sigma)$:

\begin{remark}
Let $\varepsilon := 4\mu - (2 + 4/p + n/q)$. Then
$$
B_{qp}^{4\mu - 4/p}(\Sigma) = B_{qp}^{2 + n/q + \varepsilon}(\Sigma)
$$
and, by \eqref{eq:Assump_pq}, $\varepsilon > 0$. Note that $\varepsilon$ can be chosen arbitrarily small by taking $4\mu$ sufficiently close to $2 + 4/p + n/q$. In the most relevant case $n=2$ and $q=2$, this becomes
$$
B_{2p}^{4\mu - 4/p}(\Sigma) = B_{2p}^{3+\varepsilon}(\Sigma)
$$
where $p > 4$ so that $4/p + n/q < 2$ in \eqref{eq:Assump_pq}. Observe also that
$$
H_2^{3+\varepsilon}(\Sigma) = B_{22}^{3+\varepsilon}(\Sigma) \hookrightarrow B_{2p}^{3+\varepsilon}(\Sigma),
$$
where $H_r^s$ denotes a Bessel potential space.

Moreover, for any fixed $\delta > \varepsilon$, there exists a (large) $q > 1$ so that
$$
B_{qp}^{4\mu - 4/p}(\Sigma) = B_{qp}^{2+\delta}(\Sigma).
$$
Indeed, setting $\delta = n/q + \varepsilon$ gives $q = n / (\delta - \varepsilon)$. Here the requirement $4/p + n/q < 2$ becomes $4/p < 2 - (\delta - \varepsilon)$, so $p > 2$, since $\delta - \varepsilon > 0$ can be made arbitrarily small. Finally, if $s > 2+\delta$ and $s - n/p \ge 2+\delta - n/q$, then
$$
H_p^{s}(\Sigma) \hookrightarrow B_{qp}^{2+\delta}(\Sigma).
$$
In the special case $p = q$, this yields
$$
H_p^s(\Sigma) \hookrightarrow B_{pp}^{2+\delta}(\Sigma) = W_p^{2+\delta}(\Sigma),
$$
provided $s > 2+\delta$. We note that $s>2+\delta$ may be chosen arbitrarily close to 2, by choosing $\delta>\varepsilon>0$ sufficiently close to 0.

All of the above mentioned properties and embeddings follow e.g. from \cite{Tri78} in combination with \cite[Theorem 4.4]{Ama25}.
\end{remark}

\end{document}